\newtheorem{Lemma 1}{Lemma}[section]
\newtheorem{Theorem}[Lemma 1]{Theorem}
\newtheorem{Lemma 3}[Lemma 1]{Lemma}
\newtheorem{Lucas The}[Lemma 1]{Theorem}
\newtheorem{num}[Lemma 1]{Lemma}
\newtheorem{lem}[Lemma 1]{Lemma}
\newtheorem{lemme}[Lemma 1]{Lemma}
\newtheorem{lem 1}[Lemma 1]{Lemma}
\newtheorem{Main}[Lemma 1]{Theorem}
\newtheorem{Corollary}[Lemma 1]{Corollary}
\newtheorem{Corollary 1}[Lemma 1]{Corollary}
\newtheorem{Corollary 2}[Lemma 1]{Corollary}
\begin{document}
\title{On Integer sequences in Product sets}
\address{Department of Mathematics, Indian Institute of Technology Roorkee,India 247667}
\author{Sai Teja Somu}
\date{October 29 , 2015}
\maketitle
\begin{abstract}
Let $B$ be a finite set of natural numbers or complex numbers. Product set corresponding to $B$ is defined by $B.B:=\{ab:a,b\in B\}$. In this paper we give an upper bound for longest length of consecutive terms of a polynomial sequence present in a product set accurate up to a positive constant. We give a sharp bound on the maximum number of Fibonacci numbers present in a product set when $B$ is a set of natural numbers and a bound which is accurate up to a positive constant when $B$ is a set of complex numbers. 
\end{abstract}

\section{Introduction}
In \cite{A} and \cite{B} the author has proved that if $B$ is a set of natural numbers then the product set corresponding to $B$ cannot contain long arithmetic progressions. In \cite{A} it was shown that the longest length of arithmetic progression is at most $O(|B|\log |B|)$. We try to generalize this result for polynomial sequences. Let $P(x)\in \mathbb{Z}[x]$ be a polynomial with positive leading coefficient. Let $R$ be the longest length of consecutive terms of the sequence, that is, \[R=max\{n : \text{there exists an $x\in \mathbb{N}$ such that }  \{P(x+1),\cdots,P(x+n)\}\subset B.B\}.\]
We prove that $R$ cannot be large for every polynomial $P(x)$. In section 2 we consider the question of determining maximum number of Fibonacci and Lucas sequence terms in a product set. 

As in \cite{A} we define an auxiliary bipartite graph $G(A, B.B)$ and auxiliary graph $G'(A,B.B)$ which are  constructed for any sets $A$ and $B$ whenever $A\subset B.B$. The color classes of $G$ are two copies of $B$  whereas $G'$ has only one color copy of $B$ and for each $a\in A$
we pick a unique representation $a = b_1b_2$ and place an edge $(b_1, b_2)$ in $G$ and in $G'$. Note that $V(G) = 2|B|$, $V(G')=|B|$ and
$E(G)=E(G')=|A|$. Observe that $G'$ can have self loops and $G$ cannot have self loops.
\section{Number of Fibonacci Numbers and Lucas Numbers}
Let $B$ be a finite set of naural numbers. Let $A$ be the set of Fibonacci numbers contained in the product set. From \cite{C} there are only two perfect square Fibonacci numbers, viz., $1$ and $144$. Hence there can be at most two self loops in the graph $G'(A,B.B)$. We give an upper bound on cardinality of $A$ by using the following lemma.

\begin{Lemma 1}\label{Lemma}
Let $F_n$ and $F_m$ be $n$th and $m$th Fibonacci numbers and $m<n$ and $n>2$ then $gcd(F_n,F_m)<\sqrt{F_n}$.
\end{Lemma 1}
\begin{proof}
Let $d=gcd(m,n)$. From the strong divisibilty property of Fibonacci numbers $gcd(F_n,F_m)=F_{d}$.
 We know that $F_n=\frac{\alpha^n-\beta^n}{\alpha-\beta},$ where $\alpha=\frac{1+\sqrt{5}}{2}$ and $\beta=\frac{1-\sqrt{5}}{2}$. Since $m<n$, clearly $d\leq \frac{n}{2}$. 
 If $d=1$ then the hypothesis is clearly true and if $d>1$, we have
\begin{equation*}
(F_d)^2=\frac{(\alpha^d-\beta^d)^2}{(\alpha-\beta)^2}< \frac{(\alpha^{2d}-\beta^{2d})}{(\alpha-\beta)}\leq F_n.
\end{equation*}
Thus, $gcd(F_m,F_n)<\sqrt{F_n}$.
\end{proof}

\begin{Theorem}
There cannot be more than $|B|$ Fibonacci numbers in the product set $B.B$ when $B$ is a set of natural numbers.
\end{Theorem}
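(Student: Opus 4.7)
My plan is to construct an explicit injection $\phi \colon A \to B$, which will immediately give $|A| \leq |B|$. For each Fibonacci number $F \in A$, I would pick a factorization $F = b_F c_F$ with $b_F, c_F \in B$ and $b_F \leq c_F$; then $c_F \geq \sqrt{F}$ automatically, and I set $\phi(F) := c_F$. This is the natural choice suggested by the setup of $G'(A,B.B)$: orient each edge towards its larger endpoint and hope each vertex receives at most one arrow.

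The key step is injectivity. Suppose $\phi(F) = \phi(F') = c$ for distinct $F, F' \in A$, say $F < F'$. Then $c$ divides both $F$ and $F'$, so $c \mid \gcd(F,F')$, while $c \geq \sqrt{F'}$ by construction. Provided the index of $F'$ exceeds $2$, Lemma 1.1 gives $\gcd(F,F') < \sqrt{F'}$, contradicting $c \geq \sqrt{F'}$. The only Fibonacci numbers outside the lemma's scope are $1$ and $2$; for these, the constraint $F < F'$ leaves only $F = 1$, whose forced factorization $1 = 1 \cdot 1$ gives $\phi(1) = 1 \neq 2 = \phi(2)$, so no collision arises from the small cases either.

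The main obstacle is obtaining a sufficiently strong upper bound on $\gcd(F, F')$, and that is exactly the content of Lemma 1.1; once it is in hand, the rest of the proof is essentially combinatorial bookkeeping. As a sanity check, the at most two self-loops in $G'(A,B.B)$ noted in the introduction (coming from the perfect-square Fibonacci numbers $1$ and $144$) cause no trouble under this scheme: they correspond to $b_F = c_F$, and since the two perfect squares are distinct, they produce distinct values of $\phi$, and the injectivity argument above still rules out any collision between a loop and a non-loop edge.
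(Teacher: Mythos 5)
Your proof is correct, but it takes a genuinely different (and shorter) route than the paper. The paper works with the auxiliary graph $G'(A,B.B)$ and uses Lemma \ref{Lemma} to show that the only cycles are self-loops, which bounds the edge count by $(|B|-1)+2=|B|+1$; it then needs a second, more delicate argument (a path between the loop vertices $1$ and $12$, ending with the observation that no Fibonacci number has the form $12b$ with $b<12$) to shave off the last edge and reach $|B|$. You instead orient each edge of $G'$ toward its larger endpoint and show via the same Lemma \ref{Lemma} that every vertex has in-degree at most one: a collision $\phi(F)=\phi(F')=c$ with $F<F'$ forces $c\mid\gcd(F,F')<\sqrt{F'}\leq c$, which is absurd. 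This gives the injection $A\to B$ and hence $|A|\leq|B|$ in one step, with the self-loops absorbed automatically, so you avoid the paper's entire endgame. Two small remarks: with the paper's convention $F_n=(\alpha^n-\beta^n)/(\alpha-\beta)$ the value $2$ is $F_3$ and is already covered by the lemma, so the only value genuinely outside its scope as the larger element is $1$, and your explicit check of the pair $(1,2)$ is harmless but unnecessary; and note that your argument, unlike the paper's acyclicity argument, leans on the ordering and divisibility of natural numbers, which is why the paper's cycle-based formulation is the one that carries over to the later complex-number and Lucas-sequence settings.
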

\begin{proof}
We claim that in the graph $G'(A,B.B)$ there cannot any cycle other than self loops. Suppose there is a $k$-cycle $b_1b_2\cdots b_kb_1$ which implies that $b_ib_{i+1}$ for $1\leq i \leq k-1$ and $b_kb_1$ are distinct Fibonacci numbers in the set $B.B$. Without loss of generality let us assume $b_1b_2=F$ is the largest Fibonacci number among $b_ib_{i+1}$ for $1\leq i \leq k-1$ and $b_kb_1$.
From Lemma \ref{Lemma}, we have 
\begin{align*}
& b_1\leq gcd(b_1b_2,b_1b_k)<\sqrt{F},\\
& b_2\leq gcd(b_1b_2,b_2b_3)<\sqrt{F}.
\end{align*}
Hence $F=b_1b_2<F$ which is a contradiction.
Hence there cannot be any cycle. From \cite{C} there cannot be more than 2 self loops.
Hence the number of edges which equal number of Fibonacci numbers in the set $B.B$ cannot exceed $|B|+1$.

Now we prove that there cannot be $|B|+1$ Fibonacci numbers. Suppose there are $|B|+1$ Fibonacci numbers, as the graph cannot have any cycle there should be two self loops namely, $1$ and $12$ and the graph obtained by removing the two self loops should be connected tree of $|B|$ vertices. Since the graph is connected there should be a path between $1$ and $12$. Let the path be $b_1b_2\cdots b_k$ which implies that  $b_ib_{i+1}$ for $1\leq i\leq k-1$ are Fibonacci numbers and without loss of generality assume $b_1=1$ and $b_k=12$. Let $l$ be the index of highest value of $b_ib_{i+1}$. Clearly $l\neq 1$ and if $2\leq l\leq k-2$ then from Lemma \ref{Lemma}  
\begin{align*}
& b_l\leq gcd(b_lb_{l+1},b_{l-1}b_l)<\sqrt{b_lb_{l+1}},\\
& b_{l+1}\leq gcd(b_lb_{l+1},b_{l+1}b_{l+2})<\sqrt{b_lb_{l+1}}.
\end{align*}
Which implies $b_lb_{l+1}<b_lb_{l+1}$. Hence $l=k-1$. Again from Lemma\ref{Lemma}
\begin{equation*}
b_{k-1}\leq gcd(b_{k-1}b_k,b_{k-2}b_{k-1})<\sqrt{b_{k-1}b_k}
\end{equation*}
which implies $b_{k_1}<b_k=12$ but there are no Fibonacci numbers of the form $12b$ with $b<12$.
Hence there cannot be $|B|+1$ Fibonacci numbers. Thus number of Fibonacci numbers in the set $B.B$ is $\leq |B|$. 
\end{proof}
Now we consider the case where $B$ is a set of complex numbers and try to give an upper bound on the number of Lucas sequence terms in the product set. Let $A$ be the set of Lucas sequence terms with indices greater than $30$ in the  product set $B.B$.   

\begin{Lemma 3}\label{Lemma 3}
There cannot be any cycle in $G(A,B.B)$.
\end{Lemma 3}
\begin{proof}
Suppose there was a cycle $b_1b_2\cdots b_{2k}b_1$.
Then 
\begin{align*}
  b_1b_2&=L_{n_1}\\
  b_2b_3&=L_{n_2}\\
        &.       \\
        &.\\
        &.\\
  b_{2k}b_1&=L_{n_{2k}},      
\end{align*}
where $L_{n_i}$ are Lucas sequence terms with indices greater than $30$, which implies
\begin{equation}
\prod_{i=1}^{k}L_{n_{2i}}=\prod_{j=1}^{k}L_{n_{2j-1}}
\end{equation}
Let $n_i$ be the largest index $\geq 31$. Then from \cite{D} $L_{n_i}$ contains a primitive divisor $p$ and hence $p$ divides exactly one side of (1) and therefore (1) cannot be true. Thus there cannot be any cycle.
\end{proof}

\begin{Lucas The}
Let $(L_n)_{n=1}^\infty$ be a Lucas sequence. Then number of distinct elements of $(L_n)_{n=1}^\infty$ in $B.B$ is less than $2|B|+30$.
\end{Lucas The}
\begin{proof}
Since number of vertices in $G(A,B.B)$ is $2|B|$ and from Lemma \ref{Lemma 3} there cannot be a cycle in $G(A,B.B)$ the number of edges in $G(A,B.B)$ $\leq 2|B|-1$. Hence the number of distinct terms in Lucas sequence of index $\geq 31$ is $\leq 2|B|-1$. Hence number of distinct Lucas sequence terms in $B.B$ is $\leq 2|B|+29$.
\end{proof}
    
\section{Polynomial sequences}

Now we turn onto the second problem in this paper. Given a polynomial $P(x)$ with positive leading coefficient and integer coefficients what can we say about the longest length of consecutive terms in the product set $B.B$.

 Since there can be at most finitely $r$ such that $P(r)\leq0$ or $P'(r)\leq 0$ there exists an $l$ such that $P(r+l)>0$ and $P'(r+l)>0$ for all $r\geq 1$. Hence we can assume without loss of generality that every irreducible factor $f(x)$ of $P(x)$ $f(x)>0 \text{ and } f'(x)>0~~ \forall x\geq 1$ as this assumption only effects $R$  by a constant. From now  we will be assuming that for every irreducible divisor $f(x)$ of $P(x)$ $f(x)>0$ and $f'(x)>0$ for all natural numbers $x$. 
   We prove three lemmas in order to give an upper bound for $R$.

 If $f(x)\in \mathbb{Z}[x]$ is an irreducible polynomial of degree $\geq 2$. Let $D$ be the discriminant of $f(x)$. Let $d$ be the greatest common divisor of the set $\{f(n): n\in \mathbb{N}\}$. Let $f_1(x)=\frac{f(x)}{d}.$ Denote $Dd^2$ by $M$. If $p^e||M$ then $p^e\nmid d$ and hence there exists an $a_p$, such that $f_1(x)$ is not divisible by $p$ for all $x\equiv a_p (\mod p^e)$. 
  From Chinese remainder theorem there exists an integer $a$ such that $a\equiv a_p(\mod p^e)$ for all primes dividing $M$ and hence there exists an $a$ such that $f_1(x)$ is relatively prime to $M$ for all $x\equiv a (\mod M)$. 

\begin{num}\label{Lemma 4} 
For sufficiently large $R$
the number of numbers in the set $\{f_1(r+i) : 1\leq i \leq R,r+i\equiv a \mod M \}$ with atleast one prime factor greater than $R$ is $\geq \frac{R}{3M}$ for every non negative integer $r$. 
\end{num}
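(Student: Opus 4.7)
The plan is to upper-bound the complementary set
\[
U := \{\, i : 1 \le i \le R,\ r + i \equiv a \!\!\pmod M,\ f_1(r+i)\text{ is }R\text{-smooth}\,\}
\]
by $\tfrac{2R}{3M} + O(1)$ uniformly in $r$, from which the lemma follows since the full arithmetic progression of $i$'s in $[1,R]$ has size $R/M + O(1)$.

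The central ingredient is a per-prime-power counting estimate. For each prime $p \le R$ with $p \nmid M$, one has $p \nmid D$, so by Hensel's lemma the congruence $f_1(x) \equiv 0 \pmod{p^j}$ has at most $k := \deg f_1$ solutions modulo $p^j$ for every $j \ge 1$. Combining with $r + i \equiv a \pmod M$ via the Chinese Remainder Theorem (using $\gcd(p^j, M) = 1$) yields
\[
\bigl|\{\, i \in [1,R] : p^j \mid f_1(r+i),\ r+i \equiv a \!\!\pmod M\,\}\bigr| \ \le\ k\!\left(\frac{R}{Mp^j} + 1\right).
\]
From here I would multiply by $\log p$ and sum over primes $p \le R$ and exponents $j \ge 1$ to bound the ``$R$-smooth log-mass''
\[
\sum_{i \in U} \log f_1(r+i) \ =\ \sum_{\substack{p \le R\\ p \nmid M}} (\log p)\sum_{i \in U} v_p(f_1(r+i)),
\]
using $\sum_{i \in U} v_p \le \sum_{i \in S} v_p$ where $S$ is the set of admissible $i$'s. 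Applying Mertens' theorem and the fact that $\log f_1(r+R) \ll k\log(r+R)$ reduces this to $\frac{kR\log R}{M}(1+o(1))$ plus an additive error of order $k^2 R \log(r+R)/\log R$. For a matching lower bound: since every irreducible factor of $f$ is positive with positive derivative on $\mathbb N$ and $\deg f_1 \ge 2$, one has $f_1(r+i) \ge c(r+i)^k$ for a fixed $c > 0$. Summed over $i \in U$ even when $U$ is placed adversarially at the smallest indices of the progression, this yields a lower bound of order $k|U|\log\!\bigl(\tfrac{|U|M}{2} + r\bigr)$. Comparing the two estimates is intended to force $|U| \le \tfrac{2R}{3M}$ once $R$ is sufficiently large.

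The main obstacle is tracking the constants uniformly in $r$ well enough to recover the explicit factor $\tfrac{1}{3}$. For $r \gg R$ the lower bound dominates easily and the conclusion is almost immediate; the delicate regime is $r \le R$, where $\log f_1(r+i)$ and $\log R$ are of comparable magnitude, so the naive comparison only gives the trivial $|U| \le |S|$ and a finer argument is needed---for instance, partitioning the primes $p \le R$ dyadically, or distinguishing indices $i \in U$ according to the size of $f_1(r+i)$ and handling the ``large $f_1$'' indices by the smooth-number inequality $\Omega(f_1(r+i)) \ge \log f_1(r+i)/\log R$. Working out this refinement carefully is where the bulk of the technical work lies.
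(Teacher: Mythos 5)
Your overall strategy --- bound the logarithm of the $R$-smooth part of the product of the $f_1(r+i)$ from above by prime-power counting, bound it from below by the sheer size of the smooth values, and compare --- is the paper's strategy. But the way you execute the upper bound opens a genuine gap that is fatal to the stated constant. You replace the number of roots $\rho(p)$ of $f_1$ modulo $p$ by the worst-case bound $k=\deg f_1$ at every prime, so your upper bound for the smooth log-mass becomes $\frac{kR\log R}{M}(1+o(1))$. Your lower bound also carries the factor $k$, since $\log f_1(r+i)=k\log(r+i)+O(1)$; it is of order $k|U|\log\bigl(\tfrac{|U|M}{2}+r\bigr)$. The factor $k$ cancels in the comparison, leaving $|U|\log\bigl(\tfrac{|U|M}{2}+r\bigr)\le \frac{R\log R}{M}(1+o(1))$, and for small $r$ the choice $|U|=\frac{2R}{3M}$ makes the left side about $\frac{2R\log R}{3M}<\frac{R\log R}{M}$, so nothing forces $|U|\le\frac{2R}{3M}$; you recover only the trivial bound $|U|\le\frac{R}{M}(1+o(1))$. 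You in fact notice this (``the naive comparison only gives the trivial $|U|\le|S|$''), but you misdiagnose it as a constant-tracking problem in the regime $r\le R$; neither a dyadic decomposition of the primes nor the inequality $\Omega(m)\ge\log m/\log R$ recovers the lost factor.

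The missing idea is that $\rho(p)$ must be kept exactly and averaged over primes: by Landau's prime ideal theorem applied to $\mathbb{Q}[x]/(f)$, one has $\sum_{p\le R}\frac{\rho(p)\log p}{p}=\log R+O(1)$, i.e.\ $\rho(p)$ has average $1$, not $k$. This makes the upper bound on the smooth log-mass $\frac{R\log R}{M}$ plus error terms, with no factor of $k$, while the lower bound retains the full factor $n=\deg f\ge 2$. The comparison then reads $n\,l\log(r+l)\le\frac{R\log R}{M}+o\bigl(\frac{R\log R}{M}\bigr)$, which forces $l\le\frac{R}{nM}(1+o(1))\le\frac{R}{2M}(1+o(1))<\frac{2R}{3M}-2$. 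This is exactly where the hypothesis $\deg f\ge 2$ enters and where the constant $\frac13$ comes from; without the prime ideal theorem (or some equivalent statement that $\rho(p)$ averages to $1$) the argument cannot close. For $r$ very large compared to $R$ the left side grows like $l\log r$ and the conclusion is easier, as you correctly observed.
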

\begin{proof}
Let \[Q=\prod_{\substack{i=1\\r+i\equiv a\mod M}}^{R}f_1(r+i).\]
Let $S$ be the largest divisor of $Q$ with all prime factors $\leq R$.
Let $e_p$ be the index of $p$ in $S$. Let $\rho(p)$ denote the number of solutions modulo $p$ of the congruence $f(x)\equiv 0 (\mod p)$.
\begin{align*}
\log S&=\sum_{\substack{p\nmid M\\p\leq R}}e_p\log p \\
&=\sum_{\substack{p\nmid M\\p\leq R}}\sum_{n=1}^{\lfloor\frac{\log f(r+R)}{\log p}\rfloor}\sum_{\substack{i=1\\r+i\equiv a\mod M\\f_1(r+i)\equiv 0 \mod p^n}}^{R}\log p\\
&=\sum_{\substack{p\nmid M\\p\leq R}}\sum_{n=1}^{O(\frac{\log (r+R)}{\log p})}(\frac{\rho(p)\log p R}{Mp^n}+O(\log p))\\
&=\sum_{\substack{p\nmid M\\p\leq R}}\frac{\rho(p)\log p R}{Mp}+O\left(\frac{\log (r+R)R}{\log R}\right).
\end{align*}
From prime ideal theorem, we have
\[
\sum_{\substack{p\nmid M\\p\leq R}}\frac{\rho(p)\log p R}{Mp}=\frac{R\log R}{M}+O(R).
\]
Thus, we have
\begin{equation*}
\log S=\frac{R\log R}{M}+O\left(\frac{\log (r+R)R}{\log R}\right). 
\end{equation*}
Let $L$ be a subset of $\{f_1(r+i) : 1\leq i\leq R,r+i\equiv a \mod M\}$ containing all the numbers which do not contain any prime factor greater than $R$ and let $l$ denote the cardinality of $L$.
\begin{align*}
\log \prod_{\substack{i=1\\f_1(r+i)\in L}}^{R}f_1(r+i)&\geq \log \prod_{i=1}^{l}f_1(r+i)\\
&=n\sum_{i=1}^{l}\log (r+i)+O(l)\\
&=nl\log (r+l)+O(l),
\end{align*}
where $n$ is the degree of the polynomial $f(x)$.
Hence 
\begin{equation*}
nl\log (r+l)+O(l)\leq \frac{R\log R}{M}+O\left(\frac{\log (r+R)R}{\log R}\right).
\end{equation*}
Hence for sufficiently large $R$, $l$ should be less than $ \frac{2R}{3M}-2$. Hence number of numbers belonging to the set $\{f_1(r+i) : 1\leq i \leq R,r+i\equiv a \mod M\}$ with atleast one prime factor greater than $R$ is $\geq \frac{R}{3M}$.
\end{proof}
 
The following corollary immediately follows from Lemma \ref{Lemma 4}. 
\begin{Corollary}\label{Corollary}
If $P(x)$ has an irreducible divisor of degree $\geq 2.$ Then there exist $\Omega(R)$ numbers in the set $\{P(r+i) : 1\leq i \leq R\}$ with atleast one prime factor greater than $R$. 
\end{Corollary}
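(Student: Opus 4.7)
The plan is to simply invoke Lemma \ref{Lemma 4} on a suitable factor of $P(x)$ and transfer the conclusion from $f_1$ to $P$. Let $f(x)$ be an irreducible divisor of $P(x)$ with $\deg f \geq 2$, and construct from $f$ the quantities $d$, $M$, $a$, and $f_1$ exactly as in the paragraph preceding Lemma \ref{Lemma 4}. Note these are all constants depending only on $P$, not on $R$ or $r$.

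First I would apply Lemma \ref{Lemma 4} directly to $f_1$: for $R$ sufficiently large, at least $R/(3M)$ of the values $f_1(r+i)$ with $1 \leq i \leq R$ and $r+i \equiv a \pmod{M}$ carry a prime factor strictly larger than $R$. Next I would observe the trivial divisibility chain $f_1(r+i) \mid f(r+i) \mid P(r+i)$, so any prime $p > R$ dividing $f_1(r+i)$ automatically divides $P(r+i)$. Thus each of these $\geq R/(3M)$ indices $i$ produces a distinct value $P(r+i)$ with a prime factor exceeding $R$.

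Since $M$ depends only on $f$ (hence only on $P$) and not on $R$ or $r$, the count $R/(3M)$ is $\Omega(R)$, which is precisely what the corollary asserts. The main (and essentially only) subtlety is confirming that the hypothesis $\deg f \geq 2$ is genuinely what allows us to apply Lemma \ref{Lemma 4}, since the lemma implicitly relies on the prime ideal theorem estimate $\sum_{p \leq R} \rho(p) \log p / p = \log R + O(1)$ whose error term is absorbed only when the degree is at least $2$; this is already built into the statement of Lemma \ref{Lemma 4}, so nothing further is needed here beyond citing it.
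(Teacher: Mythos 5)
Your proposal is correct and is exactly the deduction the paper intends: the paper offers no written proof, stating only that the corollary ``immediately follows'' from Lemma \ref{Lemma 4}, and your argument (apply the lemma to an irreducible factor $f$ of degree $\geq 2$, then push primes up the divisibility chain $f_1(r+i)\mid f(r+i)\mid P(r+i)$, noting $M$ is a constant depending only on $P$) is precisely that omitted routine step. One minor quibble: the role of $\deg f\geq 2$ is really in the comparison $nl\log(r+l)\leq \frac{R\log R}{M}+\cdots$ at the end of the lemma's proof rather than in the prime ideal theorem's error term, but as you say this is internal to Lemma \ref{Lemma 4} and does not affect your deduction.
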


\begin{lem}\label{Lemma 5}
 If $f(x)$ is a linear polynomial. If $r\geq R^\gamma$ for a $\gamma >1$ then there exists a constant $c>0$ depending upon $\gamma$ such that for sufficiently large $R$, number of numbers of the set $\{f(r+i) : 1\leq i \leq R\}$ with a prime factor greater than $R$ is greater than $cR$.
\end{lem}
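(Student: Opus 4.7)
\medskip
\noindent\textbf{Proof plan.} My plan is to imitate the sieve bound of Lemma~\ref{Lemma 4}, replacing the prime ideal theorem by the elementary Mertens theorem, which is enough since $f(x)=ax+b$ is linear. After factoring out $\gcd(a,b)$ as a constant (which only contributes $O(R)$ to every logarithm below) I may assume $\gcd(a,b)=1$, so every prime $p\mid a$ satisfies $p\nmid f(r+i)$ for all $i$ and can be ignored. I set $Q:=\prod_{i=1}^{R}f(r+i)$ and let $S$ be the largest divisor of $Q$ whose prime factors are all at most $R$. For any prime $p\nmid a$ and any $n\geq 1$, the congruence $f(x)\equiv 0\pmod{p^n}$ has exactly one root modulo $p^n$, so
\[
\#\{\,i\in[1,R]:\, p^n\mid f(r+i)\,\}\;=\;\frac{R}{p^n}+O(1).
\]

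Summing $e_p\log p$ over $p\leq R$ in the style of Lemma~\ref{Lemma 4}, the series $\sum_{n\geq 1}1/p^n=1/(p-1)$ together with Mertens' estimate $\sum_{p\leq R}\tfrac{\log p}{p-1}=\log R+O(1)$ should yield
\[
\log S \;=\; R\log R + O(R),
\]
where the error absorbs both the $O(1)$-per-prime and the tail cut-off $n\leq O(\log(r+R)/\log p)$, using $\log(r+R)=O(\log R)$ since $r\geq R^{\gamma}$ with $\gamma$ fixed.

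Next I would estimate $Q$ from below. Since $f(r+i)\geq ar\geq aR^{\gamma}$ for every $i\in[1,R]$,
\[
\log Q \;\geq\; R\log(ar) \;\geq\; \gamma R\log R - O(R).
\]
Let $l$ denote the number of $i\in[1,R]$ for which $f(r+i)$ has no prime factor exceeding $R$. The product of those $l$ values divides $S$, and each such value is at least $ar$, so $(ar)^{l}\leq S$. Taking logarithms and dividing by $\log(ar)\geq\gamma\log R-O(1)$ gives
\[
l \;\leq\; \frac{R\log R+O(R)}{\gamma\log R-O(1)} \;=\; \frac{R}{\gamma}+o(R),
\]
so at least $(1-1/\gamma)R-o(R)$ of the values $f(r+i)$ admit a prime factor larger than $R$, and any constant $c<1-1/\gamma$ works for $R$ sufficiently large.

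The point I would need to track most carefully is that the hypothesis $\gamma>1$ is used in its sharp form: the argument creates a strict gap between $\log S\leq R\log R+O(R)$ and $\log Q\geq \gamma R\log R-O(R)$, and it is this gap that forces a positive proportion of indices to be rough. If $\gamma\leq 1$ the gap collapses and no positive $c$ can be extracted from this method, which explains why the lower bound $r\geq R^{\gamma}$ with $\gamma>1$ is indispensable.
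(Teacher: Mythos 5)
Your argument follows the paper's proof almost line for line --- same $Q$, same smooth part $S$, same comparison of $\log S$ against the product of the $R$-smooth values, with Mertens standing in for the prime ideal theorem --- and the skeleton is sound, but one step is genuinely wrong as written: the claim that $\log(r+R)=O(\log R)$ ``since $r\geq R^{\gamma}$''. The hypothesis $r\geq R^{\gamma}$ is a \emph{lower} bound on $r$; nothing caps $r$ from above, and the lemma is invoked in case (2) of the main theorem for arbitrary $r>R^{\gamma}$. Consequently your assertion $\log S=R\log R+O(R)$ is unjustified: the truncation error contributes $O(\log p)$ for each of the $O(\log(r+R)/\log p)$ values of $n$ and each of the roughly $R/\log R$ primes $p\leq R$, i.e.\ a total of $O\bigl(R\log(r+R)/\log R\bigr)$, which is not $O(R)$ when, say, $r=e^{R^{2}}$. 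With that faulty numerator, your final division by the lower bound $\gamma\log R-O(1)$ would leave an error of order $R\log(r+R)/\log^{2}R$, which is not $o(R)$, so the chain of inequalities as you wrote it does not close.

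The repair is exactly what the paper does: keep the error honest, $\log S=R\log R+O\bigl(R\log(r+R)/\log R\bigr)$, and divide the inequality $l\log(ar)\leq\log S$ by the actual quantity $\log(ar)$, which is comparable to $\log(r+R)$, rather than by $\gamma\log R$. The main term gives $R\log R/\log(ar)\leq R/\gamma$, and the error term becomes $O(R/\log R)=o(R)$ because the factor $\log(r+R)$ cancels. Everything else in your write-up --- the reduction to $\gcd(a,b)=1$, the count $R/p^{n}+O(1)$ for the unique root of $f(x)\equiv 0 \pmod{p^{n}}$, the lower bound $(ar)^{l}\leq S$, and the resulting constant $c<1-1/\gamma$ (slightly better than the paper's $(\gamma-1)/(2\gamma)$, which comes from the same bound $l\leq R/\gamma+o(R)$ rounded more generously) --- matches the paper's argument and is correct.
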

\begin{proof}
The proof is similar to that of Lemma \ref{Lemma 4}.
Let $Q=\prod_{i=1}^{R}f(r+i)$
and $S$ be the largest divisor of $Q$ with all prime factors  $\leq R$.
\begin{align*}
\log S&=\sum_{p\leq R}e_p\log p\\
&=\sum_{p\leq R}\sum_{n=1}^{O\left(\frac{\log (r+R)}{\log p}\right)}\sum_{\substack{i=1\\f(r+i)\equiv 0 \mod p^n}}^{R}\log p\\
&=\sum_{p\leq R}\frac{R\log p}{p}+O\left(\frac{R\log (r+R)}{\log R}\right)\\
&=R\log R+O\left(\frac{R\log (r+R)}{\log R}\right).
\end{align*}
Let $L$ be a subset of \{$1\leq i\leq R$\} containing all $i$ such that $f(r+i)$ has all prime factors $\leq R$. Let the cardinality of $L$ be $l$.
\begin{align*}
\log \prod_{\substack{i=1\\i\in L}}^{R}f(r+i)&\geq \log \prod_{i=1}^{l}f(r+i)\\
&=l\log (r+R) +O(R). 
\end{align*}
which implies \begin{equation*}
 l \log (r+R)+O(R)\leq R\log R +O\left(\frac{R\log(r+R)}{\log R}\right).
\end{equation*}
For sufficiently large $R$, $l$ should be $\leq \frac{(1+\gamma)}{2\gamma}R$. Hence for sufficiently large $R$ number of numbers of the set $\{f(r+i) : 1\leq i \leq R  \}$ with atleast one prime factor greater than $R$ is $\geq \frac{(\gamma-1)R}{2\gamma }$.
\end{proof}
We have the following Corollary for Lemma \ref{Lemma 5}.
\begin{Corollary 1}\label{Corollary 2}
If degree of every irreducible divisor of $P(x)$ is 1 and $r\geq R^\gamma$ then number of elements of the set  $\{P(r+i): 1\leq i \leq R\}$ with atleast one prime factor greater than $R$ is $\Omega(R)$.
\end{Corollary 1}

\begin{lemme}\label{Lemma 6}
Let $f(x)$ be a linear polynomial.
If $r\leq R^\gamma$ for some $\gamma>1$ then there are $\Omega(\frac{R}{\log R})$ numbers of the set $\{f(r+i): 1\leq i \leq R \}$ with atleast one prime factor greater than $\frac{R}{2}$. 
\end{lemme}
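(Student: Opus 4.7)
The plan is to reuse the smooth-versus-rough decomposition of $Q = \prod_{i=1}^{R} f(r+i)$ from Lemmas \ref{Lemma 4} and \ref{Lemma 5}, but to obtain the $\Omega(R/\log R)$ bound by an explicit count of primes rather than by the $\log Q$-versus-$\log S$ comparison used there. The key observation for the small-$r$ regime is that \emph{every} prime $p$ with $R/2 < p \leq R$ and $p \nmid a$ (where $a$ is the leading coefficient of $f$) must divide at least one $f(r+i)$ with $1 \leq i \leq R$: the congruence $f(r+i) \equiv 0 \pmod p$ pins $i$ down to a unique residue modulo $p$, and because $p \leq R$ this residue is realised inside $[1,R]$. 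Crucially, this fact does not depend on $r$, and it replaces the $\log Q$ estimate that was decisive in Lemma \ref{Lemma 5}.

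Concretely, let $S$ be the largest $R/2$-smooth divisor of $Q$ and write $f(r+i) = s_i b_i$ with $s_i$ smooth and $b_i$ a product of primes exceeding $R/2$. Since $\log(Q/S) = \sum_{p > R/2} e_p \log p$ where $e_p$ is the $p$-adic valuation of $Q$, the observation above combined with Chebyshev's asymptotic $\theta(R) - \theta(R/2) = R/2 + o(R)$ gives
\[
\log(Q/S) \;\geq\; \sum_{\substack{R/2 < p \leq R \\ p \nmid a}} \log p \;=\; \frac{R}{2} + o(R),
\]
where the finitely many primes dividing $a$ have been excluded at a cost of only $O(1)$. On the other hand the hypothesis $r \leq R^\gamma$ forces $b_i \leq f(r+R) = O(R^\gamma)$, so $\log b_i \leq \gamma \log R + O(1)$ for every $i$. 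Writing $\log(Q/S) = \sum_{i : b_i > 1} \log b_i$ and dividing yields
\[
|\{i : 1 \leq i \leq R,\; b_i > 1\}| \;\geq\; \frac{R/2 + o(R)}{\gamma \log R + O(1)} \;=\; \Omega\!\left(\frac{R}{\log R}\right),
\]
which is the desired bound, since $b_i > 1$ means $f(r+i)$ has a prime factor greater than $R/2$.

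I do not anticipate a serious obstacle. The argument reduces to the asymptotic $\theta(R) - \theta(R/2) \sim R/2$ together with the elementary pigeonhole observation about primes $p \leq R$, both of which are in the same spirit as the tools used in Lemmas \ref{Lemma 4} and \ref{Lemma 5}. The only bookkeeping is the exclusion of primes dividing $a$, a finite set whose total logarithmic contribution is $O(1)$ and which in any case lies below $R/2$ for $R$ sufficiently large. In contrast with Lemma \ref{Lemma 5}, where an $\Omega(R)$ bound required $\log Q \gg \log S$ via the hypothesis $r \geq R^\gamma$, here the much weaker bound $\log(Q/S) = \Omega(R)$ already suffices, precisely because the target $\Omega(R/\log R)$ is smaller by a factor of $\log R$.
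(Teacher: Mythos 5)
Your proof is correct and rests on exactly the same two facts as the paper's: every prime $p\in(\frac{R}{2},R]$ with $p\nmid a$ divides some $f(r+i)$ with $1\leq i\leq R$ because the solution of $f(r+i)\equiv 0 \pmod p$ lands in an interval of length $R\geq p$, and each $f(r+i)=O(R^\gamma)$ can carry only boundedly much of this large-prime content. The paper runs this double count by counting primes ($\Omega(\frac{R}{\log R})$ primes, each term absorbing $O(1)$ of them) while you weight each prime by $\log p$ and divide by $\gamma\log R+O(1)$; this is only a cosmetic difference in bookkeeping, so the two arguments are essentially the same.
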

\begin{proof}
Let $f(n)=an+b$ then there are $\Omega (\frac{R}{\log R})$ primes between $(\frac{R}{2},R]$ which are coprime to $a$. Each prime has one or two $i\in [1,R]$ such that $p|f(r+i)$. For each $f(r+i)$ there are at most $O(1)$ prime divisors belonging to $(\frac{R}{2},R]$. Hence there are $\Omega(\frac{R}{\log R})$ numbers with atleast one prime factor greater than $\frac{R}{2}$.
\end{proof}

\begin{Corollary 2}\label{Corollary 3}
If degree of every irreducible divisor of $P(x)$ is 1 and $r\leq R^\gamma$ then number of elements of the set  $\{P(r+i): 1\leq i \leq R\}$ with atleast one prime factor belonging to the range $(\frac{R}{2},R]$ is $\Omega(\frac{R}{\log R})$.
\end{Corollary 2}

In a graph $G(V,E)$ for $v\in V$ we define $V(v)$ to be the set of all vertices adjacent to $v$.

\begin{lem 1}\label{Lemma 7}
If there is a bipartite graph $(A,B,E)$ such that for all $a\in A$ and $b\in B$, degree of $a$ is $\leq n$ and degree of $b$ is $\geq 1$ then there exists a sequence of vertices $b_1,\cdots,b_k$ with $b_i\in B$ satisfying $V(b_1)\neq \phi$ and $V(b_i)/(\cup_{j=1}^{j=i-1}V(b_j))\neq \phi$ for $2\leq i\leq k$ and $k\geq \frac{|B|}{n}$.
\end{lem 1}
\begin{proof}
The proof is by induction on $n$. For $n=1$ the lemma is true since degree of $a\leq 1~~ \forall ~ a\in A\implies V(b_1)\cap V(b_2)=\phi ~~\forall~ b_1\neq b_2\in B$ and the sequence $b_1,\cdots, b_{|B|}$ will clearly satisfy $V(b_1)\neq \phi$ and $V(b_i)/(\cup_{j=1}^{i-1}V(b_j))\neq \phi$ for  $2\leq i\leq k$.
If the lemma is true for $n=r$ we have to prove for $n=r+1$. Order the vertices of $B$ as $b_1,\cdots b_{|B|}$. Let 
$S=\{a\in A : \text{degree of $a$}\geq 1\}$.
Let $S_1=V(b_1)$ and for $2\leq i \leq |B|$, let  $S_i=V(b_i)/(\cup_{j=1}^{i-1}V(b_j))$.
Observe that $S=\cup_{i=1}^{|B|}S_i$.
Let $K$ be a set defined by $K=\{b_i : S_i\neq \phi\}$.
If $|K|\geq \frac{|B|}{r+1}$ then we can choose the vertices in the set $K$ arranged in a sequence which satisfies the hypothesis.
If $|K|<\frac{|B|}{r+1}$ then consider the induced subgraph $A\cup (B-K)$ then degree of $a$ is less than or equal to $r$ for all $a\in A.$ From the induction assumption there exists a sequence with length $\geq \frac{|B-K|}{r}>|B|(1-\frac{1}{r+1})\frac{1}{r}=\frac{|B|}{r+1}$ in $B-K$ satisfying the hypothesis which completes the proof by induction.   
\end{proof}

\begin{Main}
Let $P\in Z[x]$ and has a positive leading coefficient and if $\{P(r+1),\cdots,P(r+R)\}$ is contained in the product set $B.B$ for a nonnegative integer $r$ and natural number $R$ and $B$ is a set of complex numbers. \newline
(1)If $P$ has an irreducible factor of degree $\geq 2$ then $R=O(|B|)$.\newline
(2)If $P$ has no irreducible factor of degree $\geq 2$ and $r>R^\gamma$ and $\gamma >1$ then $R=O(|B|)$.\newline
(3)If $P$ has no irreducible factor of degree $\geq 2$ and $r\leq R^\gamma$ and $\gamma >1$ then $R=O(|B|\log |B|)$.
\end{Main}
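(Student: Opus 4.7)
The plan is to combine Corollary~\ref{Corollary}, Corollary~\ref{Corollary 2} and Corollary~\ref{Corollary 3} (for cases~(1), (2) and (3) respectively) with the combinatorial Lemma~\ref{Lemma 7}, in order to extract a large subset $A^{*}\subset\{P(r+1),\ldots,P(r+R)\}$ on which the auxiliary bipartite graph $G(A^{*},B.B)$ is forced to be acyclic; the forest bound $|A^{*}|\le 2|B|-1$ will then yield the three asserted estimates. Concretely, I would first let $A$ denote the subset consisting of those $P(r+i)$ that have a prime factor $>R$ in cases~(1) and~(2), or a prime factor in $(R/2,R]$ in case~(3); the three corollaries give $|A|=\Omega(R)$ in the first two cases and $|A|=\Omega(R/\log R)$ in the third.

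Next, I would form the bipartite incidence graph $\mathcal{I}$ between the set $\mathcal{P}$ of admissible primes and $A$, with an edge $p\sim a$ whenever $p\mid a$. Each $a\in A$ has degree at least $1$ in $\mathcal{I}$. A single admissible prime $p$ divides at most $\deg(P)\,\lceil R/p\rceil$ of the integers $P(r+1),\ldots,P(r+R)$ (the polynomial $P$ has at most $\deg(P)$ roots modulo $p$, each contributing at most $\lceil R/p\rceil$ values of $i\in[1,R]$), so the $\mathcal{P}$-side degree is bounded by the absolute constant $n:=2\deg(P)$. Applying Lemma~\ref{Lemma 7} with the lemma's ``$A$'' being $\mathcal{P}$ and the lemma's ``$B$'' being our set $A$ produces a sequence of distinct elements $a_{1},\ldots,a_{k}\in A$ with $k\ge|A|/n$, together with admissible primes $p_{1},\ldots,p_{k}$ satisfying $p_{i}\mid a_{i}$ and $p_{i}\nmid a_{j}$ for every $j<i$.

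Setting $A^{*}=\{a_{1},\ldots,a_{k}\}$, I would then verify that $G(A^{*},B.B)$ is acyclic. A cycle in $G(A^{*},B.B)$ has even length by bipartiteness; reading its $2m$ edges around the cycle as $a_{\ell_{1}},\ldots,a_{\ell_{2m}}\in A^{*}$ (with $\ell_{1},\ldots,\ell_{2m}$ pairwise distinct) and grouping them alternately exactly as in the proof of Lemma~\ref{Lemma 3}, one obtains the integer identity $\prod_{j=1}^{m}a_{\ell_{2j-1}}=\prod_{j=1}^{m}a_{\ell_{2j}}$. Let $\ell^{*}=\max_{t}\ell_{t}$; then $p_{\ell^{*}}$ divides $a_{\ell^{*}}$ but none of the other cycle edges $a_{\ell_{t}}$ (since $\ell_{t}<\ell^{*}$), so $p_{\ell^{*}}$ divides exactly one side of the identity, a contradiction. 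Hence $G(A^{*},B.B)$ is a forest on $2|B|$ vertices and $k=|A^{*}|\le 2|B|-1$; combined with $k\ge|A|/n$ this gives $R=O(|B|)$ in cases~(1) and~(2), and $R/\log R=O(|B|)$ (whence $R=O(|B|\log|B|)$) in case~(3).

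The main delicacy will be ensuring that the $\mathcal{P}$-side degree bound in the second step is uniform in $r$; this is exactly why Lemma~\ref{Lemma 7} is invoked in this asymmetric way rather than through a naive ``conflict graph'' between elements of $A$ sharing an admissible prime. Indeed the number of admissible prime divisors of a single $P(r+i)$ is essentially $\Theta(\log(r+R)/\log R)$, which is unbounded when $r$ dwarfs $R$, so a conflict-graph degree bound would fail in case~(1); the asymmetric incidence graph sidesteps this because the bound $n=2\deg(P)$ depends only on $P$. That $B$ is permitted to be a set of complex numbers is harmless, since every divisibility assertion in the acyclicity argument lives in $\mathbb{Z}$: the edge-values $P(r+i)$ and the witnessing primes $p_{i}$ are ordinary integers.
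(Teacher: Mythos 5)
Your proposal is correct and follows essentially the same route as the paper: the three corollaries supply $\Omega(R)$ (resp.\ $\Omega(R/\log R)$) values with an admissible prime factor, Lemma~\ref{Lemma 7} applied to the prime--value incidence graph extracts a subsequence in which each element owns a ``private'' prime, and that private prime kills any cycle in the auxiliary graph $G(A^{*},B.B)$, giving the forest bound $|A^{*}|\le 2|B|-1$. Your degree bound $2\deg(P)$ on the prime side is in fact slightly more careful than the paper's stated bound of $\deg(P)$ (which overlooks that a prime in $(R/2,R]$ can hit two indices in $[1,R]$), but this changes nothing asymptotically.
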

\begin{proof}
If $P$ has an irreducible factor $f$ of degree greater than $2$ or $P(x)$ has no irreduible divisor of degree $\geq 2$ and $r> R^\gamma$ 
let 
\begin{equation*}
A=\{p : \text{$p$ is a prime,~$p|P(r+i)$ for some $1\leq i\leq R$,~$p>R$}\}
\end{equation*}
and let
\begin{equation*}
C=\{P(r+i) : \text{$1\leq i \leq R$,$\exists$ prime $p>R$ \text{ such that }  $p|P(r+i)$}\}.
\end{equation*}
If $P(x)$ has no irreducible divisor of degree $\geq 2$ and $r\leq R^\gamma$ then let 
\begin{equation*}
A=\{p : \text{$p$ is a prime, $\frac{R}{2}< p\leq R$ and 
$p|P(r+i)$ for some $1\leq i \leq R$}\}
\end{equation*}
and let 
\begin{equation*}
C=\{P(r+i) : \text{$1\leq i \leq R$,$\exists$ prime $p\in (\frac{R}{2},R]$ \text{ such that } $p|P(r+i)$}\}.
\end{equation*}
In cases (1) and (2) from Corollaries \ref{Corollary} and \ref{Corollary 2} the size of $C$ is $\Omega(R)$. In case (3) from Corollary \ref{Corollary 3} the size of $C$ is $\Omega(\frac{R}{\log R})$.
If we consider a bipartite graph $G$ between $A\cup C$ constructed such that there exits an edge $p\in A$ and $P(r+i)\in C$ if and only if $p|P(r+i)$. In this graph the degree of $a\in A$ is is less than or equal to the degree of polynomial $P$. Hence from Lemma \ref{Lemma 7} there exists a sequence $c_1,c_2,\cdots ,c_k$ with $k\geq \frac{|C|}{\text{degree of $P$}}$ such that $V(c_1)\neq \phi$ and $V(c_i)/\cup_{j=1}^{k-1}V(c_j)\neq \phi$.  Therefore every $c_i$ has a prime divisor which does not divide any of $c_j$ for $1\leq j \leq i-1$. Let  $C'=\{c_1,\cdots,c_k\}$. Note that in cases (1) and (2) $|C'|=\Omega(R)$  and in case 3 $|C'|=\Omega(\frac{R}{\log R})$.
\par
Consider the bipartite auxiliary graph $G(C',B.B)$. We claim that there cannot be any cycle in this graph. Suppose there was a cycle $b_1b_2\cdots b_{2k}b_1$ then 
\begin{align*}
  b_1b_2&=c_{n_1}\\
  b_2b_3&=c_{n_2}\\
        &.       \\
        &.\\
        &.\\
  b_{2k}b_1&=c_{n_{2k}}    
\end{align*}
and 
\begin{equation}
\prod_{i=1}^{k}c_{n_{2i}}=\prod_{j=1}^{k}c_{n_{2j-1}}
\end{equation}
let $n_i$ be the highest index present in the cycle. There exists a prime $p$ such that $p|c_{n_i}$ and $p\nmid c_{n_j}$ for $j\neq i$ and hence $p$ divides exactly one side of (2) and hence (2) cannot be true. Thus there exists no cycle in $G(C',B.B)$. Hence $|C'|\leq 2|B|-1$. Therefore $R=O(|B|)$ in cases (1),(2) and $R=O(|B|\log |B|)$ in case (3) which completes the proof of the theorem. 
\end{proof}
\bibliographystyle{amsplain}

\end{document}